\theoremstyle{plain}
\newtheorem{thm}{Theorem}[section]
\newtheorem{prop}[thm]{Proposition}
\newtheorem{cor}[thm]{Corollary}
\theoremstyle{definition}
\newtheorem{df}[thm]{Definition}
\newtheorem{ex}[thm]{Example}
\newtheorem{ex-notn}[thm]{Example/Notation}
\newtheorem{rem}[thm]{Remark}
\newcommand*{\calHom}{\mathcal{H}\mathit{om}}                  
\def\det{\operatorname{det}}
\def\dim{\operatorname{dim}}
\def\ext{\operatorname{Ext}}
\def\hom{\operatorname{Hom}}
\def\Pic{\operatorname{Pic}}
\def\CC{{\mathbb C}}     
\def\GG{{\mathbb G}}
\def\PP{{\mathbb P}}
\def\ZZ{{\mathbb Z}}
\def\calF{{\mathcal F}}
\def\calI{{\mathcal I}}
\def\calO{{\mathcal O}}
\def\ilim{{\lim\limits_{\longleftarrow}}}
\def\ra{\rightarrow}
\def\into{\hookrightarrow}
\def\onto{\to\hskip-1.7ex\to}
\def\Mtwo{{\em Macaulay} 2\expandafter}
\numberwithin{equation}{section}
\begin{document}

\title{Splitting criteria for vector bundles on higher dimensional varieties}
\author{Parsa Bakhtary}

\address{Department of Mathematics and Statistics \\
King Fahd University of Petroleum and Minerals \\
Dhahran, Saudi Arabia 31261}

\thanks{Partially supported by a KFUPM FT grant.}
\subjclass{14J60; 14F05}
\keywords{vector bundle, splitting, Horrocks' criterion}
\email{pbakhtary@kfupm.edu.sa}

\begin{abstract}
We generalize Horrocks' criterion for the splitting of vector bundles on projective space by establishing an analogous 
splitting criterion for vector bundles on a class of smooth complex projective varieties of dimension $\geq 4$, 
over which every extension of line bundles splits. 
\end{abstract}

\maketitle

\section{Introduction}
In algebraic geometry there is a rich history of studying when a vector bundle over a projective space splits, i.e. is isomorphic to a direct sum of line bundles.  Grothendieck first used cohomological methods in sheaf theory to prove his celebrated theorem which says that every vector bundle over $\PP^1$ splits as a direct sum of line bundles \cite{Gr2}.  This was followed by Horrocks' famous criterion, which announced that a vector bundle on $\PP^n$, $n \geq 3$, splits iff its restriction to a hyperplane $H=\PP^{n-1} \subset \PP^n$ splits \cite{Horr}.
\\

Soon after came the notoriously difficult conjectures of Hartshorne \cite{H3}, which state that all vector bundles of rank $2$ on $\PP^n$ with $n \geq 7$ must split, though over $\CC$ no non-splitting (indecomposable) $2$-bundle over $\PP^5$ is known.  Over the complex numbers, the Horrocks-Mumford bundle is the only non-splitting $2$-bundle known on $\PP^4$ \cite{HM}, and its existence is far from obvious.  It should be mentioned that there are rank $2$ indecomposable bundles on $\PP^5$ in characteristic $2$ \cite{T} and on $\PP^4$ in any positive characteristic different from the Horrocks-Mumford bundle \cite{K}.  
\\

There has been a formidable body of work dedicated to finding splitting criteria and constructing indecomposable bundles over projective space, and the well-known but out of print book by Okonek, Schneider, and Spindler \cite{Ok} gives an excellent survey of progress made in this direction up until 1980.  There has also been much work since then, with many notable results \cite{A}, \cite{B2}, \cite{KRP1}, \cite{KRP2}, \cite{Mal}, \cite{R}, \cite{S}, \cite{Z}.  In addition to splitting criteria for $r$-bundles on multiprojective spaces \cite{BM2}, \cite{CMR}, cones over rational normal curves \cite{B1}, and blowings up of the plane \cite{BM1}, extensions of Horrocks' criterion to Grassmannians and quadrics have been established \cite{A}, \cite{O}. Furthermore the splitting of $2$-bundles on hypersurfaces in $\PP^4$ and $\PP^5$ has been studied \cite{Mad}, \cite{CM} and generalized in \cite{KRR1} and \cite{KRR2}, which show respectively that arithmetically Cohen-Macaulay $2$-bundles on general hypersurfaces of degree $\geq 6$ in $\PP^4$ and on general hypersurfaces of degree $\geq 3$ in $\PP^5$ must split.  Moreover, notions such as uniform vector bundles have been generalized to Fano manifolds \cite{W}.
\\

However, to the author's knowledge, the literature lacks a study of when a Horrocks' type criterion occurs on arbitrary smooth projective varieties.  The spirit in which we pursue this question is similar to that of Horrocks': when can we reduce the splitting of a vector bundle $E$ on a smooth projective variety $X$ to the splitting of the restriction $E_{|Y}$ for a suitable proper closed subscheme $Y \subset X\,$?  Horrocks showed that as soon as the dimension of a projective space is at least three, the splitting of a vector bundle on that projective space is equivalent to the splitting of its restriction to a hyperplane.  In this scenario the restriction map $\Pic(\PP^n) \overset{\sim} \ra \Pic(H)$ is an isomorphism, so the line bundles on the subscheme $H$ are precisely those coming from $\PP^n$, no more, no less.  Thus if $E_{|H}$ splits, we already have a suitable candidate on $\PP^n$ that $E$ ought to be isomorphic to, should it split.  The dimension of the hyperplane being at least $2$ is crucial, since any non-splitting bundle must split when restricted to a line $\PP^1$ by Grothedieck's theorem.
\\

We remedy this issue for higher dimensional varieties using the Grothendieck-Lefschetz theorem on Picard groups (see \cite{H2} for an exposition), which says that if $X$ is a smooth complex projective variety of dimension $n \geq 4$, then given any ample effective divisor $D$ (not necessarily reduced) on $X$, then the natural restriction map $\Pic(X) \ra \Pic(D)$ is an isomorphism.  In this way, we ensure that the line bundles on our divisor $D$ are precisely those coming from $X$, as in Horrocks' situation with projective space.  Then, assuming $E_{|D}$ splits over $D$, our task is to try to lift a given isomorphism $E_{|D} \overset{\sim} \ra \bigoplus {L_i}_{|D}$ to one on $X$, or to find the obstruction to such a lifting.
\\

Though this lifting does not exist in general, it can be found on a certain class of varieties.  We call a scheme $X$ a {\em{Horrocks scheme}} if $H^1(X,L)=H^2(X,L)=0$ for every line bundle $L$ on $X$.   A Horrocks scheme is like 
projective $n \geq 3$ space in the sense that every extension of line bundles splits.  Here a Horrocks' type criterion holds.  

Our main result is the following theorem.

\begin{thm}
Let $X$ be a smooth complex projective Horrocks variety of dimension $n \geq 4$.  A vector bundle $E$ on $X$ splits iff 
$E_{|D}$ splits over $D$, where $D$ is an ample effective divisor on $X$.
\end{thm}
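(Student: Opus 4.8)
The forward implication is immediate, since any direct sum decomposition of $E$ restricts to one of $E_{|Y}$; the content lies in the converse, so assume $E_{|Y} \cong \bigoplus_{i=1}^{r} M_i$ with each $M_i \in \Pic(Y)$. The plan is to manufacture a candidate bundle on $X$, lift the given splitting off of $Y$ to a homomorphism on all of $X$, and then use ampleness to promote that homomorphism to an isomorphism. First I would invoke the Grothendieck--Lefschetz theorem (valid because $X$ is smooth projective of dimension $n \geq 4$ and $Y$ is an ample effective divisor) to write $M_i = (L_i)_{|Y}$ for unique $L_i \in \Pic(X)$, and set $F := \bigoplus_i L_i$. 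Then $F_{|Y} \cong \bigoplus_i M_i \cong E_{|Y}$, so the chosen splitting is a distinguished isomorphism $\phi \in H^0(Y, \calG_{|Y})$, where $\calG := \calHom(E,F)$. The key feature is that $\calG_{|Y} = \calHom(E_{|Y}, F_{|Y}) \cong \bigoplus_{i,j} M_i \otimes M_j^{-1}$ is a direct sum of line bundles on $Y$.

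The heart of the argument, and the step I expect to be the main obstacle, is lifting $\phi$ to a homomorphism $\Phi \colon E \to F$ on $X$. I would proceed through the infinitesimal neighborhoods $Y_m$ of $Y$, cut out by $\calO_X(-mY)$. Tensoring the exact sequence $0 \to \calO_Y(-mY) \to \calO_{Y_{m+1}} \to \calO_{Y_m} \to 0$ by the locally free sheaf $\calG$ shows that the obstruction to extending a homomorphism from $Y_m$ to $Y_{m+1}$ lives in $H^1(Y, \calG_{|Y}(-mY))$. But $\calG_{|Y}(-mY)$ is again a direct sum of line bundles on $Y$, so this obstruction vanishes precisely because $Y$ is a splitting scheme. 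Hence $\phi$ extends compatibly to every $Y_m$. This is exactly where the splitting hypothesis on the divisor does the decisive work: it collapses every obstruction group to zero.

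To pass from these formal extensions to an honest homomorphism on $X$, I would note that for $m \gg 0$ Serre duality together with Serre vanishing gives $H^1(X, \calG(-mY)) = 0$, so the restriction map $H^0(X, \calG) \to H^0(Y_m, \calG_{|Y_m})$ is surjective. Lifting the $m$-th order extension of $\phi$ then produces $\Phi \colon E \to F$ with $\Phi_{|Y} = \phi$. It remains to verify that $\Phi$ is an isomorphism. Since $E$ and $F$ both have rank $r$, the section $\det \Phi$ of a line bundle on $X$ has zero scheme $Z$ that is either empty or of pure codimension one. Because $\Phi_{|Y} = \phi$ is an isomorphism, $Z \cap Y = \emptyset$, so $Z \subseteq X \setminus Y$; but $Y$ is ample, hence $X \setminus Y$ is affine and cannot contain the projective, positive-dimensional $Z$ (here $\dim X \geq 2$ already suffices). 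Therefore $Z = \emptyset$, $\Phi$ is an isomorphism, and $E \cong \bigoplus_i L_i$ splits, completing the proof.
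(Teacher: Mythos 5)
Your proof is correct, but the engine driving the converse is genuinely different from the paper's. The paper routes the argument through an intermediate cohomological criterion (Proposition 4.8): it first establishes $H^1(X,E\otimes L)=0$ for \emph{every} line bundle $L$ on $X$, via a descending induction on the twist $\calO_X(mY)$ in which the splitting hypothesis on $Y$ kills the groups $H^1(Y,E_{|Y}\otimes M)$ and Serre duality/vanishing supplies the base case at $m\ll 0$; with that global vanishing in hand, the isomorphism is lifted from $Y$ itself in a single step using $0\to\calO_X(-Y)\to\calO_X\to\calO_Y\to 0$. You instead spend the splitting hypothesis on $Y$ locally in the normal direction, killing the obstruction groups $H^1(Y,\calG_{|Y}(-mY))$ one infinitesimal order at a time so that $\phi$ extends to every thickening $Y_m$, and only then jump to $X$ from a high thickening via Serre vanishing --- this is essentially the mechanism of the paper's Proposition 3.1 (splitting over the formal completion implies splitting over $X$), with the formal splitting supplied by your order-by-order obstruction computation. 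The endgames also differ: the paper uses the Picard isomorphism to get $c_1(E)=c_1(F)$, so that $\det\psi\in H^0(X,\calO_X)\cong\CC$ is a nonzero constant, whereas you observe that the degeneracy locus of $\det\Phi$ is a divisor (nonempty would force it to meet the ample $Y$, and $\det\Phi\not\equiv 0$ since $\Phi_{|Y}$ is an isomorphism), hence empty; both are sound. The paper's route has the advantage of isolating Proposition 4.8 as a reusable Horrocks-type criterion, while yours is more self-contained and makes transparent exactly where the splitting property of $Y$ enters. Like the paper (cf.\ Remark 4.9), you never actually use the hypothesis that $X$ itself is a splitting variety in the nontrivial direction; it is needed only for the trivial one in the stronger form of Proposition 4.8.
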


\subsection{Acknowledgements}
The author is grateful to J. W{\l}odarczyk for interesting discussions and to J. Wi\'{s}niewski for helpful and informative emails.  The author would also like to give a heartfelt thanks to N. Mohan Kumar, who spotted an error in the first version of this paper and told the author of a class of examples of non-splitting bundles that restrict to split bundles.  The author also thanks the referee for valuable suggestions regarding the references and for sharpening the rank of the bundle in example 4.9.

\section{Preliminaries}
Nearly all of the results we use are familiar to a seasoned student of algebraic geometry, and can be found throughout \cite{H1}.  Throughout this paper we will work over $\CC$.  In this section we mention some of the deeper theorems that are relevant to the proofs in the next section.  We first state Horrocks' criterion in its full form. 

\begin{thm}[Horrocks]
Let $E$ be a rank $r$ vector bundle on $\PP^n$.  Then $E$ splits iff $H^i(\PP^n,E(k))=0$ for every $k \in \ZZ$ and every $i$ with $0<i<n$.
\end{thm}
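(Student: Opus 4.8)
The forward implication is immediate and I would dispose of it first: if $E \cong \bigoplus_j \calO(a_j)$, then $H^i(\PP^n, E(k)) \cong \bigoplus_j H^i(\PP^n, \calO(a_j + k))$, and Serre's computation of the cohomology of line bundles on projective space gives $H^i(\PP^n, \calO(m)) = 0$ for every $m$ whenever $0 < i < n$. The substance of the theorem is the converse, which I would prove by induction on $n$, the case $n=1$ being Grothendieck's theorem (the hypothesis being vacuous there).

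For the inductive step, assume the result on $\PP^{n-1}$ and let $E$ on $\PP^n$ (with $n \geq 2$) satisfy $H^i(E(k)) = 0$ for all $k$ and all $0 < i < n$. Fix a hyperplane $H \cong \PP^{n-1}$ cut out by a linear form $h$, and consider the restriction sequence $0 \ra E(k-1) \overset{h}{\ra} E(k) \ra E|_H(k) \ra 0$. Its long exact cohomology sequence sandwiches $H^i(E|_H(k))$ between $H^i(E(k))$ and $H^{i+1}(E(k-1))$, both of which vanish when $0 < i < n-1$. Thus $E|_H$ inherits the vanishing hypothesis on $\PP^{n-1}$ and, by induction, splits: $E|_H \cong \bigoplus_j \calO_H(a_j)$.

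The heart of the matter, and the step I expect to be the real obstacle, is lifting this splitting from $H$ to $\PP^n$. Writing $\psi \colon E|_H \overset{\sim}{\ra} \bigoplus_j \calO_H(a_j)$ for the isomorphism, I would lift $\psi$ and $\psi^{-1}$ separately by lifting global sections. Lifting $\psi$ means lifting along $H^0(\PP^n, E^\vee(a_j)) \ra H^0(H, E^\vee(a_j)|_H)$, and the restriction sequence places the obstruction in $H^1(E^\vee(a_j - 1))$. Here Serre duality is essential: since $H^i(E^\vee(m)) \cong H^{n-i}(E(-m-n-1))^\vee$, the vanishing hypothesis on $E$ transfers to $E^\vee$, so this $H^1$ is zero for $n \geq 2$ and $\psi$ lifts to $\Psi \colon E \ra \bigoplus_j \calO(a_j)$. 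Symmetrically, $\psi^{-1}$ lifts to $\Phi \colon \bigoplus_j \calO(a_j) \ra E$, the obstruction now lying in $H^1(E(-a_j - 1)) = 0$.

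It remains to see that these lifts compose to an isomorphism, and this is where the structure of $\PP^n$ is used decisively. The composite $\Psi \circ \Phi$ is an endomorphism of $\bigoplus_j \calO(a_j)$ restricting to the identity on $H$; each matrix entry is multiplication by a form of degree $a_j - a_i$, so $\det(\Psi \circ \Phi) \in H^0(\PP^n, \calO) = \CC$ is a constant whose restriction to $H$ is $\det(\id) = 1$, forcing that constant to be $1$. Hence $\Psi \circ \Phi$ is invertible, so $\Phi$ is fiberwise injective, and being a map between bundles of equal rank it is an isomorphism, giving $E \cong \bigoplus_j \calO(a_j)$ and closing the induction. The cohomological lifting of the maps is routine once the vanishing is secured; the delicate point is precisely this determinant argument, which promotes ``restricts to the identity on $H$'' to ``globally invertible.''
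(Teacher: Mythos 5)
Your proof is correct and complete. The paper does not prove this theorem itself --- it simply cites Horrocks and Okonek--Schneider--Spindler --- and your argument (induction on $n$ via restriction to a hyperplane, lifting $\psi$ and $\psi^{-1}$ separately using the vanishing hypothesis together with Serre duality, and the determinant trick to promote ``restricts to the identity on $H$'' to global invertibility) is precisely the standard proof found in those references.
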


\begin{proof}
See \cite{Horr} or \cite{Ok}.
\end{proof}

\begin{cor}[Horrocks]
Let $E$ be a rank $r$ vector bundle on $\PP^n$, with $n \geq 3$.  Then $E$ splits iff its restriction $E_{|H}$ to a hyperplane $H \cong \PP^{n-1} \subset \PP^n$ splits.
\end{cor}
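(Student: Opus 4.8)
The plan is to deduce the corollary from the cohomological criterion of the preceding theorem. The forward direction is immediate: if $E \cong \bigoplus_i \calO_{\PP^n}(a_i)$, then restricting to $H$ gives $E_{|H} \cong \bigoplus_i \calO_H(a_i)$, which splits. For the converse I would assume $E_{|H}$ splits and verify the hypothesis of Horrocks' theorem, namely that $H^i(\PP^n, E(k)) = 0$ for every $k \in \ZZ$ and every $i$ with $0 < i < n$, whence $E$ splits.

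The main tool is the exact sequence $0 \ra \calO_{\PP^n}(-1) \ra \calO_{\PP^n} \ra \calO_H \ra 0$ defining the hyperplane. Tensoring with the locally free sheaf $E(k)$ yields, for each $k$, the restriction sequence
$$0 \ra E(k-1) \ra E(k) \ra E_{|H}(k) \ra 0,$$
whose long exact cohomology sequence I would feed the information coming from the hypothesis. Since $E_{|H} \cong \bigoplus_i \calO_H(a_i)$ and $H \cong \PP^{n-1}$, the standard computation of the cohomology of line bundles on projective space gives $H^j(H, E_{|H}(k)) = 0$ for all $k$ and all $j$ with $0 < j < n-1$.

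Feeding this vanishing into the long exact sequence, I would treat three ranges of $i$ separately. For $2 \le i \le n-2$ both neighbouring terms $H^{i-1}(H, E_{|H}(k))$ and $H^{i}(H, E_{|H}(k))$ vanish, so multiplication by the defining section induces isomorphisms $H^i(\PP^n, E(k-1)) \cong H^i(\PP^n, E(k))$ for every $k$; hence all these groups equal their value for $k \gg 0$, which is zero by Serre vanishing. The delicate cases are the two extremes $i = 1$ and $i = n-1$, where only one neighbouring term vanishes, so one obtains not isomorphisms but merely a surjection $H^1(\PP^n,E(k-1)) \onto H^1(\PP^n,E(k))$ and an injection $H^{n-1}(\PP^n,E(k-1)) \into H^{n-1}(\PP^n,E(k))$, respectively.

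This is where I expect the real work to be. For $i = 1$ the surjections show $\dim H^1(\PP^n, E(k))$ is nonincreasing in $k$; combining this with the vanishing $H^1(\PP^n, E(k)) = 0$ for $k \ll 0$ — which follows from Serre duality, identifying this group with $H^{n-1}(\PP^n, E^\vee(-k-n-1))^*$, together with Serre vanishing applied in degree $n-1 > 0$ — forces $H^1(\PP^n,E(k)) = 0$ for all $k$. Dually, for $i = n-1$ the injections show $\dim H^{n-1}(\PP^n, E(k))$ is nondecreasing in $k$, and since Serre vanishing gives $H^{n-1}(\PP^n, E(k)) = 0$ for $k \gg 0$, this group vanishes identically as well. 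Having thus established the vanishing of all intermediate cohomology, Horrocks' theorem finishes the proof. Note that the hypothesis $n \ge 3$ enters precisely in guaranteeing $n-1 \ge 2$, so that the range $0 < j < n-1$ is nonempty and contains both $j = 1$ and $j = n-2$, which is exactly what makes the two extreme cases go through.
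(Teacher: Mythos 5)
Your proof is correct and follows the standard route the paper intends (the paper states this corollary without proof, as a consequence of the cohomological criterion it has just quoted): restrict via $0 \ra E(k-1) \ra E(k) \ra E_{|H}(k) \ra 0$, use the vanishing of intermediate cohomology of split bundles on $H \cong \PP^{n-1}$ to get isomorphisms in the middle range and the surjectivity/injectivity at $i=1$ and $i=n-1$, and kill everything with Serre vanishing and Serre duality. The handling of the two extreme cases and the role of $n \geq 3$ are exactly right, so nothing is missing.
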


Thus, by induction it suffices to find a plane $P \cong \PP^2 \subset \PP^n$ such that $E_{|P}$ splits.  

Recall the formal completion of $X$ along a closed subscheme $Z$ defined by the ideal $\calI \subset \calO_X$ is the ringed space $(\hat{X},\calO_{\hat{X}})$ whose topological space is $Z$ and whose structure sheaf is $\ilim (\calO_X / \calI^m)$.  Given a coherent sheaf $\calF$ on $X$ we define the completion of $\calF$ along $Z$, denoted $\hat{\calF}$ to be the sheaf $\ilim (\calF / \calI^m \calF)$ on $Z$, which has the natural structure of an $\calO_{\hat{X}}$-module.  

Our most important gadget is the Grothendieck-Lefschetz theorem on Picard groups.  We do not require the most general version.

\begin{thm}[Grothendieck]
Let $D$ be an ample effective (not necessarily reduced) divisor on a smooth complex projective variety $X$ of dimension $n \geq 4$.  Then the natural restriction map $\Pic(X) \ra \Pic(D)$ is an isomorphism.
\end{thm}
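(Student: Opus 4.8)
The plan is to factor the restriction map through the formal completion $\hat{X}$ of $X$ along $D$, writing it as the composite $\Pic(X) \ra \Pic(\hat{X}) \ra \Pic(D)$, and to prove that each of the two arrows is an isomorphism. Throughout, let $\calI = \calO_X(-D)$ be the ideal sheaf of the Cartier divisor $D$, so that $\calL := \calO_X(D)$ is ample, and let $D^{(m)}$ denote the infinitesimal neighborhood cut out by $\calI^m$, so that $D^{(1)} = D$ and $\calO_{\hat{X}} = \ilim \calO_X/\calI^m$.

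For the second arrow I would first identify $\Pic(\hat{X})$ with $\ilim_m \Pic(D^{(m)})$, a line bundle on the formal scheme being exactly a compatible system of line bundles on the $D^{(m)}$. It then suffices to show each transition map $\Pic(D^{(m+1)}) \ra \Pic(D^{(m)})$ is an isomorphism for $m \geq 1$. Since $\calI^m/\calI^{m+1}$ is a square-zero ideal in $\calO_{D^{(m+1)}}$ (because $\calI^{2m} \subseteq \calI^{m+1}$ for $m\geq 1$), the assignment $a \mapsto 1+a$ furnishes the \emph{truncated exponential} sequence
\[ 0 \ra \calI^m/\calI^{m+1} \ra \calO_{D^{(m+1)}}^{*} \ra \calO_{D^{(m)}}^{*} \ra 0, \]
whose long exact cohomology sequence pins the kernel and cokernel of $\Pic(D^{(m+1)}) \ra \Pic(D^{(m)})$ between $H^1(D,\calI^m/\calI^{m+1})$ and $H^2(D,\calI^m/\calI^{m+1})$.

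The next step is to see that both of these groups vanish for every $m \geq 1$. Identifying $\calI^m/\calI^{m+1} \cong \calO_X(-mD)|_D$ and using the short exact sequence
\[ 0 \ra \calO_X(-(m+1)D) \ra \calO_X(-mD) \ra \calI^m/\calI^{m+1} \ra 0, \]
their vanishing reduces to the vanishing of $H^j(X,\calO_X(-kD))$ for $j \leq 3$ and $k \geq 1$. As $\calL$ is ample and $X$ is smooth projective over $\CC$, Kodaira vanishing yields $H^j(X,\calO_X(-kD)) = 0$ for all $j < n$ and all $k \geq 1$; since $n \geq 4$ this covers $j = 1,2,3$. This is precisely where the hypothesis $n \geq 4$ is indispensable: controlling $H^2(D,\calI^m/\calI^{m+1})$ forces the vanishing of $H^3(X,\calO_X(-(m+1)D))$, which needs $3 < n$. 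Hence every transition map is an isomorphism, the inverse system is essentially constant, and $\Pic(\hat{X}) \cong \Pic(D)$.

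It remains to treat the first arrow $\Pic(X) \ra \Pic(\hat{X})$, and here I would invoke Grothendieck's formal existence theory---the Lefschetz conditions of SGA~2. Injectivity says a line bundle whose formal completion is trivial must itself be trivial, and follows from comparing global sections via the theorem on formal functions together with the vanishing established above. The deep point, and what I expect to be the main obstacle, is surjectivity: every formal line bundle on $\hat{X}$ must be algebraizable, i.e.\ descend to an honest line bundle on $X$. This is the effective Lefschetz theorem; the depth hypotheses it requires are automatic because $X$ is smooth and $D$ is ample of codimension one in a variety of dimension $n \geq 4$. Combining the two comparisons, the composite $\Pic(X) \ra \Pic(D)$ is an isomorphism, as claimed.
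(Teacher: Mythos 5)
The paper does not actually prove this theorem: its ``proof'' is the single line ``See \cite{Gr1} or \cite{H2}'', and the result is then used as a black box. What you have written is, in effect, a reconstruction of the argument contained in those references (SGA~2; Hartshorne, \emph{Ample Subvarieties}, Ch.~IV), and it is correct. Your factorization $\Pic(X) \ra \Pic(\hat{X}) \ra \Pic(D)$ coincides with the chain of isomorphisms the paper records immediately after the theorem via \cite{H1}, II, Ex.~9.6, and the half you carry out in detail is exactly the half where all the hypotheses enter: the truncated exponential sequence reduces each transition map $\Pic(D^{(m+1)}) \ra \Pic(D^{(m)})$ to the vanishing of $H^1$ and $H^2$ of $\calI^m/\calI^{m+1} \cong \calO_X(-mD)|_D$, and pushing this up to $X$ by the ideal-sheaf sequence turns it into $H^j(X,\calO_X(-kD))=0$ for $j\leq 3$, $k\geq 1$, which is Kodaira vanishing plus $n\geq 4$. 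Transferring the cohomology from $D$ to $X$ is the right move, since $D$ may be non-reduced or singular and no vanishing theorem is available on $D$ itself. For the remaining arrow $\Pic(X) \ra \Pic(\hat{X})$ you correctly isolate the genuinely deep input---the Lefschetz conditions Lef/Leff of SGA~2, whose depth hypotheses are automatic for smooth $X$ of this dimension---but note that this half of your proof is still a citation rather than an argument: the same black box the paper invokes, only smaller. Two points you would need to complete it: injectivity requires $H^0(X,L)\cong H^0(\hat{X},\hat{L})$ for an \emph{arbitrary} line bundle $L$, so you must apply Kodaira to $L(-mD)$ for $m\gg 0$, not only to the bundles $\calO_X(-kD)$ handled above; and the algebraization in Leff produces a line bundle on some open neighborhood $U\supseteq D$, which must then be extended from $U$ to $X$, using that $X\setminus U$ has codimension $\geq 2$ (any component of positive dimension would meet the ample divisor $D$) together with factoriality of the smooth $X$. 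In sum: where the paper cites, you prove the part responsible for the numerology and cite only the formal-geometric core, which is a genuinely more informative way to present the theorem.
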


\begin{proof}
See \cite{Gr1} or \cite{H2}.
\end{proof}

In conjunction with II, Ex. 9.6 in \cite{H1}, we have the following chain of natural isomorphisms for any positive integer $m$:
\[ \Pic(X) \overset{\sim} \ra \Pic(\hat{X}) \overset{\sim} \ra \ilim \Pic(mD) \overset{\sim} \ra \Pic(mD) \overset{\sim} \ra \Pic(D) \] 
whose composition is the natural restriction map isomorphism mentioned in the theorem.

\section{Arbitrary Varieties}
We first study the splitting behavior of a vector bundle restricted to the formal completion of a projective manifold along an ample effective divisor, and show this is equivalent to the splitting of the bundle itself.

\begin{prop}
Let $X$ be a smooth complex projective variety of dimension $n \geq 4$, and let $E$ be a vector bundle of rank $r$ on $X$.  Then $E$ splits over $X$ iff $\hat{E}$ splits over $\hat{X}$, where $\hat{X}$ is the completion of $X$ along an ample effective divisor $D$.
\end{prop}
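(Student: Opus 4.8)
The forward implication is immediate, so the plan is to concentrate on the converse. If $E \cong \bigoplus_{i=1}^r L_i$ for line bundles $L_i$ on $X$, then applying the completion functor along $D$ gives $\hat{E} \cong \bigoplus_{i=1}^r \hat{L_i}$, a splitting of $\hat{E}$ on $\hat{X}$. The real content is to lift an \emph{abstract} splitting of $\hat{E}$ back to a genuine splitting of $E$.

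Suppose then that $\hat{E}$ splits. Each summand is an invertible sheaf on $\hat{X}$, and by the Grothendieck-Lefschetz isomorphism $\Pic(X) \overset{\sim}\ra \Pic(\hat{X})$ every such summand is of the form $\hat{L_i}$ for a unique line bundle $L_i$ on $X$. Setting $F = \bigoplus_{i=1}^r L_i$, we obtain an isomorphism $\phi \colon \hat{E} \overset{\sim}\ra \hat{F}$, and it remains to produce an isomorphism $E \overset{\sim}\ra F$ on $X$ itself.

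The heart of the argument is a comparison of homomorphisms. I would set $\calG = \calHom(E,F) = E^\vee \otimes F$, a vector bundle on $X$, so that $\hom_X(E,F) = H^0(X,\calG)$ and, since $E$ and $F$ are locally free, $\hom_{\hat{X}}(\hat{E},\hat{F}) = H^0(\hat{X},\hat{\calG})$. First I would note that, since taking global sections commutes with the inverse limit defining $\hat{\calG}$, one has $H^0(\hat{X},\hat{\calG}) = \ilim_m H^0\!\big(mD,\ \calG/\calI^m\calG\big)$, where $\calI = \calO_X(-D)$ is the ideal of $D$. Next, for each $m$ I would feed the exact sequence $0 \ra \calG(-mD) \ra \calG \ra \calG/\calI^m\calG \ra 0$ into cohomology. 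Because $D$ is ample, Serre vanishing together with Serre duality yields $H^i(X,\calG(-mD)) = 0$ for all $i < n$ once $m \gg 0$; in particular both $H^0$ and $H^1$ of $\calG(-mD)$ vanish for large $m$ (here $n \geq 4$ is ample room). The long exact sequence then forces the restriction to be an isomorphism $H^0(X,\calG) \overset{\sim}\ra H^0(mD,\calG/\calI^m\calG)$ for all large $m$, so the inverse system stabilizes and
\[ \hom_X(E,F) = H^0(X,\calG) \overset{\sim}\ra \ilim_m H^0\!\big(mD,\ \calG/\calI^m\calG\big) = \hom_{\hat{X}}(\hat{E},\hat{F}). \]
In particular the isomorphism $\phi$ descends to a homomorphism $\psi \colon E \ra F$ with $\hat{\psi} = \phi$.

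Finally I must check that $\psi$ is itself an isomorphism, and this is where ampleness re-enters. The determinant $\det\psi$ is a section of the line bundle $\det E^\vee \otimes \det F$, and $\psi$ is an isomorphism exactly when this section vanishes nowhere. Since $\hat{\psi} = \phi$ is invertible, $\det\psi$ restricts to a unit along $D$; in particular it is not identically zero, and its zero locus is disjoint from $D$. Were that zero locus nonempty it would be an effective divisor, of dimension $n-1 \geq 3$, and since an ample divisor meets every closed subvariety of positive dimension it would have to meet $D$, a contradiction. Hence $\det\psi$ vanishes nowhere, $\psi$ is an isomorphism, and $E \cong F$ splits. The step demanding the most care is the cohomological comparison of the preceding paragraph, namely the stabilization of the inverse system and the vanishing it rests on; granting the identification $\hom_X(E,F) \cong \hom_{\hat{X}}(\hat{E},\hat{F})$, the descent of $\phi$ and the determinant argument are comparatively formal.
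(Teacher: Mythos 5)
Your proposal is correct and follows essentially the same route as the paper: Grothendieck--Lefschetz produces the candidate split bundle $F$, Serre duality plus ampleness of $D$ forces $H^i(X,(E^{\vee}\otimes F)(-mD))=0$ for $i\le 1$ and $m\gg 0$ so that the given isomorphism on the formal neighborhood (equivalently, on a thickening $mD$) descends to a map $\psi\colon E\to F$, and a determinant argument finishes. The only genuine divergence is the last step: the paper uses the Picard isomorphism to conclude $c_1(E)=c_1(F)$, so that $\det\psi$ is a section of $\calO_X$, i.e.\ a constant, nonzero because it is invertible along $D$, whereas you argue directly that the zero locus of $\det\psi$, being an effective divisor disjoint from $D$, would contradict ampleness of $D$ --- both are valid, and yours has the small advantage of not needing the Chern class comparison.
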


\begin{proof}
For the forward direction, suppose $E$ splits over $X$, i.e. $E \cong \bigoplus L_i$.  Then $\hat{E} \cong \bigoplus \hat{L_i}$, and each $\hat{L_i}$ is a line bundle on $\hat{X}$.

For the other direction, suppose that $\hat{E}$ splits as a direct sum of line bundles on $\hat{X}$.  Then since 
\[ \Pic(X) \cong \Pic(\hat{X}) \cong \ilim \Pic(mD) \cong \Pic(mD) \cong \Pic(D)\] 
we must have that $\hat{E} \cong \bigoplus \hat{L_i}$, for some line bundles $L_i$ on $X$.  Set $F:= \bigoplus L_i$.  Tensoring the short exact sequence 
\[ 0 \ra \calO_X(-mD) \ra \calO_X \ra \calO_{mD} \ra 0 \]
with $F^* \otimes E \cong \calHom(F,E)$ we obtain 
\[ 0 \ra \calO_X(-mD) \otimes F^* \otimes E \ra F^* \otimes E \ra F^*_{|mD} \otimes E_{|mD} \ra 0 \]
Choosing $m \gg 0$ and using Serre duality plus the fact that $\calO_X(D)$ is ample we can force $H^1(X,\calO_X(-mD) \otimes F^* \otimes E)=0$ and we get a surjection 
\[ \hom(F,E) \onto \hom(F_{|mD},E_{|mD}) \]
We can lift a given isomorphism $\varphi: F_{|mD} \overset{\sim} \ra E_{|mD}$ (this is just our original isomorphism $\hat{F} \overset{\sim} \ra \hat{E}$ restricted to a finite thickening $mD$) to a homomorphism $\psi: F \ra E$ on $X$.  The bundles $E$ and $F$ have the same rank and first Chern class, the latter because $ \calO_{mD} \cong \det E_{|mD} \otimes \det F^*_{|mD} \cong \calO_X(c_1(E)-c_1(F))_{|mD}$ implies that $ \calO_X(c_1(E)) \cong \calO_X(c_1(F))$ on $X$ since the restriction map $\Pic(X) \overset{\sim} \ra \Pic(mD)$ is an isomorphism.  Thus, 
\[ \det \psi \in \hom(\det F, \det E) \cong H^0(X,\calO_X(c_1(E)-c_1(F))) \cong H^0(X,\calO_X) \cong \CC \]
is a nonzero constant since $\psi$ restricts to an isomorphism on $mD$.  Hence $\psi$ is invertible.
\end{proof}

\begin{rem}
This proposition illustrates that, in the above setting, if $E_{|D}$ splits on a sufficiently positive divisor $D$ on $X$, then $E$ must split over $X$.  One possible approach is to make positivity assumptions on $D$ in terms of the Chern classes of $E$.
\end{rem}

\section{Horrocks Schemes}

We begin with the definition of a splitting scheme and a Horrocks scheme, which capture a cohomological feature of line bundles on projective spaces, and give some examples.

\begin{df}
A scheme $X$ is called a {\em{splitting scheme}} if $H^1(X,L)=0$ for any line bundle $L$ on $X$.  Equivalently, $\ext^1(L,M)=0$ for any line bundles $L$, $M$ on $X$, i.e. any extension of line bundles splits.
\end{df}

\begin{df}
A scheme $X$ is called a {\em{Horrocks scheme}} if $H^i(X,L)=0$ for $i=1,2$ and any line bundle $L$ on $X$.
\end{df}

\begin{rem}
Notice that for a smooth projective variety to be a splitting scheme, its dimension must be at least two.  For a curve we would have by Serre duality that $H^1(C,L^{-1}) \cong H^0(C,L \otimes \omega_C) \neq 0$ for a sufficiently ample line bundle $L$ on $C$.  Similarly a smooth projective variety must be of dimension at least three in order to be a Horrocks scheme, and for 
threefolds these notions are equivalent.
\end{rem}

The following are examples of Horrocks schemes.  It is clear how to adjust the constructions to obtain splitting schemes.

\begin{ex}
Clearly projective space $\PP^n$ with $n \geq 3$ is a Horrocks scheme.
\end{ex}

\begin{ex}
If $X$ and $Y$ are projective Horrocks varieties, then $\Pic(X \times Y) \cong \Pic(X) \times \Pic(Y)$, since $H^1(X,\calO_X)=0$ (cf. \cite{H1} Ex. III.12.6). Using the K\"unneth formula we see that the fiber product $X \times Y$ remains a Horrocks variety.  In particular, multiprojective spaces $P = \PP^{n_1} \times ... \times \PP^{n_k}$ are Horrocks schemes if each $n_i \geq 3$.
\end{ex}

\begin{ex}
Weighted projective spaces $W=\PP(a_0,...,a_n)$ with $n \geq 3$ are singular Horrocks schemes, see \cite{D} Section 2.
\end{ex}

\begin{ex}
Any global complete intersection $X \subset \PP^N$ of dimension $n \geq 3$ is necessarily a Horrocks scheme, since the Lefschetz theorem on Picard groups implies that $\Pic(X) \cong \ZZ$ and we know that $H^i(X,\calO_X(m))=0$ for 
$0< i < \dim X$ and every $m \in \ZZ$, see \cite{H2} Chapter IV Section 3 and \cite{H1} III Ex. 5.5(c).
\end{ex}

\begin{ex}
Any Grassmannian $G$ of dimension $n \geq 3$ is a Horrocks scheme.  Since $\Pic(G) \cong \ZZ$ let $\calO(1)$ 
denote the ample generator.  Then $H^i(G,\calO(m))=0$ for $i=1,2$ and $m<0$ by Kodaira vanishing, and 
for $m \geq 0$, $H^i(G,\calO(m)) \cong H^{n-i}(G,\calO(-m) \otimes \omega_G) = 0$ for $i=1,2$ by Serre duality, 
Kodaira vanishing, and the fact that $G$ is Fano.
\end{ex}

\begin{ex}
Let $X$ be a smooth projective Horrocks variety.  Let $E$ be a direct sum of $r \geq 4$ line bundles on $X$, and consider the projectivized space bundle $P:= \PP(E) \overset{\pi} \ra X$, where $\PP(E)=$Proj$(Sym(E))$.  We claim that $P$ is a Horrocks scheme as well.  We already know that $P$ is a smooth projective variety with $\Pic(P) \cong \ZZ \cdot \calO_P(1) \oplus \Pic(X)$.  Thus, any line bundle on $P$ is isomorphic to one of the form $M:=\calO_P(m) \otimes \pi^{*} L$, where $m \in \ZZ$ and $L$ is a line bundle on $X$.  Since the fibers of $\pi$ are all isomorphic to $\PP^{r-1}$ with $r-1 \geq 3$, we have that $$R^i\pi_{*}M=0 \; \; \; \; \mathrm{for} \; \; i=1,2.$$  

For $m<0$ we have $\pi_{*}\calO_P(m)=0$ and for $m \geq 0$ we have $\pi_{*}\calO_P(m)=S^m(E)$, which is each  isomorphic to a direct sum of line bundles on $X$ since $E$ is a direct sum of line bundles on $X$.  Since $X$ is a Horrocks scheme, we have that $$H^i(X,\pi_{*}M) = H^i(X, S^m(E) \otimes L)=0 \; \; \; \; \mathrm{for} \; \; i=1,2.$$  

The Leray spectral sequence implies immediately that $H^1(P,M) = H^1(X,\pi_{*}M)=0$, and it also follows that $H^2(P,M) = E^{0,2}_{\infty} \oplus E^{1,1}_{\infty} \oplus E^{2,0}_{\infty} $, where $E^{p,q}_2 = H^p(X,R^q\pi_{*}M)$ abuts to $H^{p+q}(P,M)$.  But each $E^{p,q}_2$ with $p+q=2$ is $0$ by the above vanishings, hence the infinity pages vanish also.  Thus we see that $P=\PP(E)$ is also a Horrocks scheme.
\end{ex}

\begin{rem}
In the previous example, we do not actually need $E$ to be a direct sum of line bundles provided it has a sufficiently 
short resolution by direct sums of line bundles.  For example, on $\PP^n$ the Euler sequence gives 
$$ 0 \ra \calO \ra \calO(1)^{\oplus (n+1)} \ra T_{\PP^n} \ra 0$$
so we still have that $H^i(\PP^n,S^m(T_{\PP^n})(k))=0$ for $i=1,\ldots,n-2$ and $m,k \in \ZZ$.  Hence, 
$\PP(T_{\PP^n})$ is a splitting scheme for $n \geq 3$ and a Horrocks scheme for $n \geq 4$.
\end{rem}

\begin{prop}
Let $X$ be a smooth projective splitting variety of dimension $n \geq 4$.  Then a vector bundle $E$ on $X$ splits iff 
\begin{enumerate}
\item there exists an ample effective divisor $D$ on $X$ such that $E_{|D}$ splits over $D$, and

\item $H^1(X,E \otimes L)=0$ for every line bundle $L$ on $X$.
\end{enumerate}
\end{prop}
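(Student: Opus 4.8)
The plan is to handle the two implications separately, with the forward direction being purely formal and the reverse direction carrying all the content; the reverse direction will closely follow the strategy of Proposition 3.1, with hypothesis (2) replacing the passage to a high thickening $mD$.

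For the forward direction, suppose $E \cong \bigoplus L_i$. Since $X$ is projective, ample effective divisors exist, and for any such $D$ the restriction $E_{|D} \cong \bigoplus (L_i)_{|D}$ is a direct sum of line bundles, so (1) holds. For (2), given any line bundle $L$ on $X$ we have $E \otimes L \cong \bigoplus (L_i \otimes L)$, whence $H^1(X, E \otimes L) = \bigoplus_i H^1(X, L_i \otimes L) = 0$ because $X$ is a splitting scheme.

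For the reverse direction, assume (1) and (2), and write $E_{|D} \cong \bigoplus M_i$ with $M_i$ line bundles on $D$. Since $n \geq 4$ and $D$ is ample effective, the Grothendieck-Lefschetz theorem supplies the isomorphism $\Pic(X) \overset{\sim}{\ra} \Pic(D)$, so each $M_i$ descends to a unique line bundle $L_i$ on $X$. Setting $F := \bigoplus L_i$ we obtain an isomorphism $\varphi : F_{|D} \overset{\sim}{\ra} E_{|D}$, and the whole task reduces to lifting $\varphi$ to a homomorphism $\psi : F \ra E$ on $X$ and then showing $\psi$ is an isomorphism. The step I expect to be the main obstacle is precisely the surjectivity of the restriction map $\hom(F,E) \ra \hom(F_{|D}, E_{|D})$. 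To analyze it I would tensor the structure sequence $0 \ra \calO_X(-D) \ra \calO_X \ra \calO_D \ra 0$ with $\calHom(F,E) \cong F^* \otimes E$ and pass to cohomology; the obstruction to lifting then lives in $H^1(X, \calO_X(-D) \otimes F^* \otimes E)$. This is exactly where (2) is engineered to intervene: because $F$ splits, $F^* \otimes \calO_X(-D) = \bigoplus_i L_i^* \otimes \calO_X(-D)$ is a direct sum of line bundles, so the obstruction group decomposes as $\bigoplus_i H^1(X, E \otimes (L_i^* \otimes \calO_X(-D)))$, and each summand vanishes by hypothesis (2). Note that, in contrast to Proposition 3.1, no passage to a large thickening $mD$ is needed — condition (2) annihilates the obstruction already at $D$. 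Hence $\varphi$ lifts to some $\psi : F \ra E$.

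Finally I would check that $\psi$ is an isomorphism by the determinant argument of Proposition 3.1. The bundles $E$ and $F$ have equal rank, and equal first Chern class: the isomorphism $\det E_{|D} \cong \det F_{|D}$ together with $\Pic(X) \overset{\sim}{\ra} \Pic(D)$ forces $\calO_X(c_1(E)) \cong \calO_X(c_1(F))$. Consequently $\det \psi \in \hom(\det F, \det E) \cong H^0(X, \calO_X) \cong \CC$ is a constant, and this constant is nonzero because $\psi_{|D} = \varphi$ is an isomorphism, so $\det \varphi = (\det \psi)_{|D}$ does not vanish. Thus $\det \psi$ is nowhere zero and $\psi$ is an isomorphism, giving $E \cong F = \bigoplus L_i$.
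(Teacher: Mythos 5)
Your proposal is correct and follows essentially the same route as the paper's own proof: descend the splitting type via Grothendieck--Lefschetz, kill the lifting obstruction in $H^1(X, F^* \otimes E \otimes \calO_X(-D))$ using hypothesis (2) applied to each line-bundle summand, and conclude with the determinant argument. Your explicit decomposition of the obstruction group into summands $H^1(X, E \otimes L_i^* \otimes \calO_X(-D))$ is a welcome clarification of a step the paper leaves implicit, but it is not a different argument.
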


\begin{proof}
To show necessity, suppose that $E \cong \bigoplus L_i$ on $X$, where $L_i$ are line bundles on $X$.  Then for any closed subscheme, $Z \subset X$, $E_{|Z} \cong \bigoplus {L_i}_{|Z}$ splits as well, showing (1).  The assumption that $X$ is a splitting scheme gives (2).

We show sufficiency. By (1) we have an isomorphism $\varphi: \bigoplus M_i \overset{\sim} \ra  E_{|D}$, where $M_i$ are line bundles on 
$D$.  The Grothendieck-Lefschetz theorem tells us that the restriction map gives an isomorphism $\Pic(X) \overset{\sim} \ra \Pic(D)$ of Picard groups.  Lift each $M_i$ on $D$ uniquely (up to isomorphism) to $L_i$ on $X$ and set $F:= \bigoplus L_i$.  Now tensor the short exact sequence

\[ 0 \ra \calO_X(-D) \ra \calO_X \ra \calO_D \ra 0\]

with $F^* \otimes E \cong \calHom(F,E)$ and take cohomology to get an exact sequence

\[ H^0(X,F^* \otimes E) \ra H^0(Y,F^*_{|D} \otimes E_{|D}) \ra H^1(X,F^* \otimes E \otimes \calO_X(-D))=0 \]

where the third vector space vanishes by (2).  Thus we have a surjection 

\[ \hom(F,E) \onto \hom(F_{|D},E_{|D}) \]

so we may lift our isomorphism $\varphi$ to a homomorphism $\psi: F \ra E$, and we claim $\psi$ is an isomorphism.  First, observe that $E$ and $F$ have the same rank and the same first Chern class since $\calO_X(c_1(E)-c_1(F))_{|D} \cong \calO_D$ implies that $\det(F) \cong \calO_X(c_1(F)) \cong \calO_X(c_1(E)) \cong \det(E)$ because of the aforementioned isomorphism on Picard groups. Then $\psi$ induces $\det(\psi): \det(F) \ra \det(E)$ which gives a section 

\[ \det(\psi) \in H^0(X,\det(F)^{-1} \otimes \det(E)) \cong H^0(X,\calO_X(c_1(E)-c_1(F))) \cong H^0(X,\calO_X) \cong \CC \]

which means that $\det(\psi)$ is multiplication by a constant.  But $\det(\psi)$ restricts to an isomorphism $\det(\varphi)$ on $D$, hence must be a non-zero constant and hence invertible, thus showing that $\psi$ is indeed an isomorphism.

\end{proof}

\begin{rem}
From the proof one sees that the sufficiency holds for arbitrary smooth projective varieties of dimension $\geq 4$, but the assumption that $X$ is a splitting scheme gives necessity.
\end{rem}

The following proposition was pointed out to the author by N. Mohan Kumar. 

\begin{prop}
Let $X$ be a smooth projective variety of dimension $\geq 4$.  The following are equivalent:
\begin{enumerate}

\item $X$ is a Horrocks scheme

\item every ample effective divisor $D$ on $X$ is a splitting scheme

\item there exists an ample effective divisor $D$ on $X$ which is a splitting scheme
\end{enumerate}
\end{prop}

\begin{proof}
(1) $\Rightarrow$ (2): tensoring the short exact sequence 
\[ 0 \ra \calO_X(-D) \ra \calO_X \ra \calO_D \ra 0 \]
with a line bundle $L$ on $X$ and taking cohomology we get 
\[ ... \ra H^1(X,L) \ra H^1(D,L_{|D}) \ra H^2(X,L \otimes \calO_X(-D)) \ra ... \]
The outside terms vanish by assumption, and every line bundle on $D$ is isomorphic to one of the form $L_{|D}$, for some line bundle $L$ on $X$ by the Grothendieck-Lefschetz theorem.  Hence $D$ is a splitting scheme by definition. 

(2) $\Rightarrow$ (3): trivial

(3) $\Rightarrow$ (1):
Given $D \subset X$ an ample effective codimension $1$ splitting scheme, consider the short exact sequence 
\[ 0 \ra L \otimes \calO_X((k-1)D) \ra L \otimes \calO_X(kD) \ra L_{|D} \otimes \calO_X(kD)_{|D} \ra 0 \]
for every $k \in \ZZ$.  Since $D$ is a splitting scheme, we have that $H^1(D,L_{|D} \otimes \calO_X(kD)_{|D})=0$ 
for every $k \in \ZZ$, which after taking cohomology gives surjections 
\[ H^1(X,L \otimes \calO_X((k-1)D)) \onto H^1(X,L \otimes \calO_X(kD)) \]
and injections
\[ H^2(X,L \otimes \calO_X((k-1)D)) \into H^2(X,L \otimes \calO_X(kD)) \]
for every $k \in \ZZ$.  Since $D$ is ample, we may take $k \ll 0$ and $k \gg 0$ respectively and use Serre vanishing 
to see that $H^1(X,L)=H^2(X,L)=0$ for any line bundle $L$ on $X$.
\end{proof}

The following corollary is a generalization of Horrocks' criterion for projective $n \geq 3$ space.

\begin{cor}
Let $X$ be a smooth projective Horrocks variety of dimension $n \geq 4$.  A vector bundle $E$ on $X$ splits iff its 
restriction $E_{|D}$ to an ample effective divisor $D \subset X$ splits.
\end{cor}

\begin{proof}
We show the nontrivial direction.  Assuming $E_{|D}$ splits over $D$, we see that condition (1) of Proposition 4.11 is immediately satisfied, so it suffices to check condition (2).  Let $L$ be any line bundle on $X$ and tensor the short exact sequence 
\[ 0 \ra \calO_X(-D) \ra \calO_X \ra \calO_D \ra 0 \]
with $E \otimes L \otimes \calO_X(mD)$, and take cohomology to get
\[ H^1(X,E \otimes L \otimes \calO_X((m-1)D) \ra H^1(X,E \otimes L \otimes \calO_X(mD) \ra H^1(D,E_{|D} \otimes L_{|D} \otimes \calO_X(mD)_{|D}) \]
exact.  By the previous proposition, $D$ is a splitting scheme hence the third term vanishes for any $m \in \ZZ$ since 
$E_{|D}$ splits as a sum of line bundles.  So we have surjections
\[ H^1(X,E \otimes L \otimes \calO_X((m-1)D) \onto H^1(X,E \otimes L \otimes \calO_X(mD)) \]
for every $m \in \ZZ$.  Taking $m \ll 0$ and using Serre duality, we can make the left hand side zero since $D$ is ample, and the surjections above imply that the cohomology must vanish for all integers $m$.  In particular, taking $m=0$ we have that $H^1(X,E \otimes L)=0$.  Since $L$ was arbitrary, we have shown condition (2), which completes the proof.
\end{proof}

There is a natural extension of this result using induction.

\begin{cor}
Let $X$ be a smooth projective variety of dimension $n \geq 4$ such that $H^i(X,L)=0$ for $i=1,\ldots,d-1$ and 
every line bundle $L$, where $d \geq 3$.  Suppose $H_1, \ldots H_{n-d}$ are ample divisors such that 
$H_1 \cap \ldots \cap H_l$ is smooth for each $l=1,\ldots,n-d$.  Set $Y=H_1 \cap \ldots \cap H_{n-d}$, which by 
assumption is a smooth global complete intersection in $X$ of dimension $d \geq 3$.  
Then a vector bundle $E$ on $X$ splits iff its restriction $E_{|Y}$ splits.
\end{cor}

We finish with two examples whose details are easy to check.

\begin{ex}
Consider the quadric surface $S:= \PP^1 \times \PP^1 \into \PP^3 =:X$ inside $\PP^3$ via the Segre embedding.  This is an ample surface with $\calO_X(S)=\calO_X(2)$, $\calO_S(-S)=\calO_S(-1,-1)$, and $\Omega_S \cong \calO_S(-2,0) \oplus \calO_S(0,-2)$.  Taking $E:= \Omega_X$ to be the rank $3$ cotangent bundle on $X=\PP^3$, we have the exact sequence 
\[ 0 \ra \calO_S(-1,-1) \ra {\Omega_X}_{|S} \ra \calO_S(-2,0) \oplus \calO_S(0,-2) \ra 0 \]
The obstruction to this short exact sequence splitting lies in 
\[ \ext^1_{\calO_S}(\calO_S(-2,0) \oplus \calO_S(0,-2) , \calO_S(-1,-1)) \cong H^1(S,\calO_S(1,-1)) \oplus H^1(S,\calO_S(-1,1))=0 \]
by the K\"{u}nneth formula.  Hence 
\[ E_{|S} = {\Omega_X}_{|S} \cong \calO_S(-2,0) \oplus \calO_S(-1,-1) \oplus \calO_S(0,-2) \]
splits on a smooth ample surface $S$ in $X=\PP^3$, but $E=\Omega_{\PP^3}$ itself does not split over $\PP^3$ since $H^1(\PP^3, \Omega_{\PP^3}) \cong \CC \neq 0$.
\end{ex}

\begin{ex}
Take $Y:= \PP^1 \times \PP^2 \subset \PP^2 \times \PP^2 =:X$ defined by the ideal $\calO_X(-1,0)$, and denote by $p_1$ 
and $q_1$ the projection to the first factor of $X$ and $Y$ respectively.  Then $\calO_X(Y)=\calO_X(1,0)$ is nef but not ample.  Letting $E:=p_1^* \Omega_{\PP^2}$, we see that $E$ is a nonsplitting rank $2$ vector bundle on $X$, since by the K\"{u}nneth formula $H^1(X,L)=0$ for any line bundle $L$ on $X$, so if $E$ were to split we must have $H^1(X,E)=0$.  However, by the same formula we see that $H^1(X,E) \cong \CC \neq 0$, so $E$ does not split over $X$.  But, 
\[ E_{|Y} \cong q_1^*({\Omega_{\PP^2}}_{|\PP^1}) \cong \calO_Y(-2,0) \oplus \calO_Y(-1,0) \]
splits over $Y$.
\end{ex}

\end{document}